\documentclass[oneside, dvipsnames, 11pt]{amsart}

\usepackage[T2A,T1]{fontenc}
\usepackage[utf8]{inputenc}
\usepackage[english]{babel}

\usepackage{xcolor}
\usepackage{hyperref}

\definecolor{myblue}{RGB}{60, 80, 197}
\hypersetup{%
  colorlinks=true,
  linkcolor=OrangeRed,
  citecolor = myblue,
  urlcolor = Blue
}

\usepackage{tabularx}
\usepackage{array}      
\usepackage{ragged2e}

\usepackage{mathtext}
\usepackage{amsmath,amsthm,mathtools,amssymb}

\theoremstyle{definition}
\newtheorem{definition}{Definition}[section]

\newtheorem{corollary}[definition]{Corollary}
\newtheorem{theorem}[definition]{Theorem}

\newtheorem{lemma}[definition]{Lemma}
\theoremstyle{remark}

\makeatletter
\newtheorem*{rep@theorem}{\rep@title}
\newcommand{\newreptheorem}[2]{%
\newenvironment{rep#1}[1]{%
 \def\rep@title{{\bf #2 \ref{##1}}}%
 \begin{rep@theorem}}%
 {\end{rep@theorem}}}
\makeatother
\newreptheorem{theorem}{Theorem}

\usepackage[left=30mm, right=30mm, bottom=25mm, top=25mm,includefoot]{geometry}
\parskip=0.5ex

\newcommand{\eqdef}{\vcentcolon=}

\DeclareMathOperator{\interior}{int}
\DeclareMathOperator{\maxx}{Max}
\DeclareMathOperator{\vertex}{Vert}
\DeclareMathOperator{\nexxt}{Next}
\DeclareMathOperator{\previous}{Prev}
\DeclareMathOperator{\initial}{in}
\DeclareMathOperator{\terminal}{ter}
\DeclareMathOperator{\conv}{conv}
\DeclareMathOperator{\lk}{lk}
\DeclareMathOperator{\Image}{Im}

\newcommand{\Z}{\mathbb{Z}}

\newcommand{\R}{\mathbb{R}}

\usepackage{epigraph}
\usepackage{textcase}
\usepackage{version}
\excludeversion{confidential}

\usepackage{caption}
\captionsetup[figure]{format=plain, font=small, labelfont=bf}
\captionsetup[table]{belowskip=5pt, format=plain, font=small, labelfont=bf}

\usepackage[colorinlistoftodos]{todonotes}

\usepackage{tikz}
\usepackage{tikz-cd} 
\usetikzlibrary{cd}
\usepackage{pgfplots}
\pgfplotsset{width=6cm,compat=newest}

\usepackage{pdfpages}

\begin{document}

  \title[On essential simplicial maps $S^3 \rightarrow S^2$]{On essential simplicial maps $S^3 \rightarrow S^2$}

  \author{Mikhail Bludov, Sergei Vad. Fomin, Oleg R. Musin}

  \date{\today}

  \address{Mikhail V. Bludov: Moscow Institute of Physics and Technology, 9 Institutskiy per., Dolgoprudny, Moscow Region, 141701, Russia}

  \email{bludov.mv@phystech.edu}

  \address{Sergei Vad. Fomin: Saint Petersburg Leonhard Euler International Mathematical Institute at PDMI RAS, 10 Pesochnaya nab., St. Petersburg, 197022, Russia}

  \email{sergei.vadimovich.fomin@gmail.com}

  \address{O. R. Musin: University of Texas Rio Grande Valley, School of Mathematical and Statistical
Sciences, One West University Boulevard, Brownsville, TX, 78520, USA}

  \email{oleg.musin@utrgv.edu}

  \begin{abstract}
    A fiber-uniform bound on the complexity of an essential simplicial map $S^3\rightarrow S^2$ is proven, and the tightness of the bound is investigated. It follows that the triangulation of the Hopf map constructed by Madahar and Sarkaria in \cite{madahar_sarkaria_simplicial_2000} is minimal in its homotopy class in terms of the number of 3-simplices in the triangulation of $S^3$.
  \end{abstract}

  \maketitle

  \section{Introduction}

  It goes back to S. Eilenberg that a continuous map $f\colon S^{4n-1}\rightarrow S^{2n}$ with non-zero Hopf invariant $H(f)\neq 0$ \textit{has complicated fibers}. More concretely, in \cite[Theorem II]{eilenberg_continuous_1940} the following theorem is proven\footnote{Beware that further in this paper the author provides a fallacious assertion, which contradicts Hopf invariant one. See \cite{may_appreciation_2012} for context. On the other hand, the correctness of this particular statement in the case $m=1$ can easily be checked. It also follows from the main theorem of this paper.}\textsuperscript{,}\footnote{A reincarnation of this statement is known as the exactness of a term in the EHP sequence, or as the interpretation of Hopf invariant as the obstruction to desuspension of a homotopy class in $\pi_{4n-1}(S^{2n})$. See e.g. \cite{putman_homotopy_nodate}.}:

  \begin{theorem}
        Let $f\colon K_1 \rightarrow K_2$ be a simplicial map, where $|K_1|\cong M$ is a triangulation of an oriented closed ($2m+1$)-manifold and $|K_2|\cong S^{m+1}$ ($m>0$) is a triangulation of the $(m+1)$-dimensional sphere $S^{m+1}$. The following properties are equivalent:
        \begin{enumerate}
            \item $H(f)=0$;
            \item there exists a map $g$ homotopic to $f$ such that for some $p\in S^{m+1}$ the preimage $g^{-1}(p)$ is an ($m-1$)-dimensional polyhedron.
        \end{enumerate}
   \end{theorem}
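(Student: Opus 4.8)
The plan is to run everything through the Pontryagin--Thom description of the Hopf invariant. After passing to a PL representative of $f$ and choosing two regular values $q_1\neq q_2$, both different from a fixed point $p\in S^{m+1}$, the fibres $L_i:=f^{-1}(q_i)$ are disjoint framed $m$-submanifolds of $M$, each Poincar\'e dual to $f^{*}\omega$ for a generator $\omega$ of $H^{m+1}(S^{m+1})$, and $H(f)=\lk(L_1,L_2)$. (When $H^{m+1}(M)\neq 0$, condition (1) should be understood to also contain $f^{*}\omega=0$; this is automatic when $M\cong S^{3}$, the case relevant to this paper.) This linking number is a homotopy invariant and is independent of the choices, so it is enough to evaluate it on any convenient map in the homotopy class.

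For $(2)\Rightarrow(1)$, suppose $g\simeq f$ has $\dim g^{-1}(p)\le m-1$, and write $F:=g^{-1}(p)$. I would first make $g$ PL and, by a homotopy supported away from a neighbourhood of $F$, put it in general position with respect to $q_1$, $q_2$ and with respect to a PL arc $\gamma\subseteq S^{m+1}$ running from $q_1$ to $p$ and missing $q_2$; since $q_1,q_2\neq p$, this does not change $F$. Pulling back the coorientation of $\gamma$, the subpolyhedron $C:=g^{-1}(\gamma)$ carries a natural $(m+1)$-chain, and $\partial C=g^{-1}(q_1)=L_1$: any further contribution to the boundary would have to lie over the endpoint $p$, where it is contained in $g^{-1}(p)=F$, of dimension $\le m-1$, one below the dimension $m$ of $\partial C$, hence negligible. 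As $\gamma$ avoids $q_2$, the chain $C$ is disjoint from $L_2$, so $L_1$ is nullhomologous in $M\setminus L_2$; in particular $[L_1]=0$ in $H_m(M)$, so $f^{*}\omega=0$ and $H(f)$ is defined, and $H(f)=H(g)=\lk(L_1,L_2)=0$. This is sharp: for a Hopf fibration $g^{-1}(p)$ is an $m$-sphere, of dimension exactly $m$, so $C$ becomes a cobordism from $L_1$ to $g^{-1}(p)$ rather than a nullhomology, and the conclusion fails, as it must.

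For $(1)\Rightarrow(2)$, when $M$ is a sphere this follows from the EHP sequence $\pi_{2m}(S^{m})\xrightarrow{\Sigma}\pi_{2m+1}(S^{m+1})\xrightarrow{H}\Z$ (exact at the middle term in the relevant range): vanishing of $H(f)$ means $f\simeq\Sigma f'$ for some $f'\colon S^{2m}\to S^{m}$, and then a pole $p$ of $S^{m+1}=\Sigma S^{m}$ has $(\Sigma f')^{-1}(p)$ equal to the corresponding pole of $S^{2m+1}$ — a single point, which is a $\le(m-1)$-dimensional polyhedron. For $M\cong S^{3}$ this collapses to the elementary fact that $H(f)=0$ iff $f$ is nullhomotopic, whereupon one takes $g$ constant, with $g^{-1}(p)=\varnothing$ for $p$ off the image (or a nullhomotopic $g$ meeting $p$ in a single point, if a nonempty $0$-dimensional fibre is wanted). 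For a general oriented closed $M^{2m+1}$ the desuspension step is unavailable; one would instead cap off $f^{-1}(p_{0})$ ($p_{0}$ a regular value) by a framed $(m+1)$-manifold, as allowed by $H(f)=0$, and feed a representative that has been compressed into an $(m-1)$-complex through the Pontryagin--Thom construction.

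The step I expect to be the main obstacle is making the chain $C=g^{-1}(\gamma)$ in $(2)\Rightarrow(1)$ fully rigorous: one needs enough PL transversality near the degenerate fibre $F$ to be sure that $g^{-1}(\gamma)$ carries an $(m+1)$-chain whose boundary is exactly $g^{-1}(q_1)$, with no stray contribution from $F$. Everything else is standard Pontryagin--Thom bookkeeping; and for non-spherical $M$ the surgery-theoretic half of $(1)\Rightarrow(2)$ is delicate, but is not needed for the $S^{3}\to S^{2}$ application.
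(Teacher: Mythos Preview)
This theorem is not proved in the paper: it is quoted from Eilenberg's 1940 paper as background, with a footnote remarking that the case $m=1$ can be checked directly and also follows from the paper's main theorem. So there is no proof in the paper to compare your attempt against directly.

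That said, the paper's proof of its own main theorem (Lemmas~\ref{surjectivity} and~\ref{3repr} feeding into Theorem~\ref{mainthm}) runs on exactly the mechanism you use for $(2)\Rightarrow(1)$: to force $H(f)=0$ one exhibits, for each component $C_i$ of $f^{-1}(p)$, a $2$-chain $D_i$ with $\partial D_i=C_i$ and $D_i\cap f^{-1}(q)=\varnothing$, and then reads off $H(f)=\lk(f^{-1}(p),f^{-1}(q))=0$. The paper's chains are built combinatorially from the triangulation and are supported on $f^{-1}([p,\alpha])$ for a vertex $\alpha$ of the target triangle; your chain is $g^{-1}(\gamma)$ for an arc $\gamma$ from $q_1$ to the low-dimensional fibre over $p$. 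These are two instantiations of the same bounding-chain witness, and it is precisely the paper's explicit combinatorial construction that lets it extract the sharp numerical bound $\mu(f,s)\ge 9$ rather than merely the qualitative conclusion $H(f)=0$.

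Your sketch itself is sound. The transversality worry you flag near $F=g^{-1}(p)$ is resolved by standard PL general position once $g$ is made simplicial with $p$ a vertex, so that $F$ sits in the $(m-1)$-skeleton and contributes nothing to the $m$-dimensional boundary of $g^{-1}(\gamma)$. For $(1)\Rightarrow(2)$ your desuspension via the EHP sequence is exactly what the paper's second footnote alludes to; for general $M$ your final paragraph is only a gesture, but that case plays no role in the paper.
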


  In \cite{madahar_sarkaria_simplicial_2000}, K. V. Madahar and K. S. Sarkaria constructed a triangulation of the Hopf fibration. It is a simplicial map $\eta\colon  S^3_{12}\rightarrow S^{2}_4$ between a $12$-vertex triangulation of the $3$-sphere and the $4$-vertex triangulation of the $2$-sphere. The map was later\footnote{In the original paper \cite{madahar_sarkaria_simplicial_2000}, a fallacious argument for minimality is provided.} \cite{madahar_simplicial_2002} observed by Madahar to be minimal in terms of the number of vertices of the $3$-sphere among the simpicial maps homotopic to the Hopf fibration. This is a consequence of the Eilenberg's theorem for $m=1$, as it is easy to deduce from it that the preimage of any vertex of the triangulation $S^2$ must contain at least $3$ vertices of the triangulation of $S^3$.

  \begin{definition}
        Let $K_1, K_2$ be pure simplicial complexes of dimensions $d_1$ and $d_2$ respectively. Let $\maxx(K_i)$ be sets of maximal faces of the complexes. For $s\in \maxx(K_2)$ and a simplicial map $f\colon K_1\rightarrow K_2$,  define
        \[\mu(f,s) \eqdef |\{\sigma \in \maxx(K_1)|f(\sigma)=s\}|.\]
  \end{definition}

  Equivalently, if you take an internal point $p$ of $|s|$, $\mu(f,s)$ is the number of maximal simplices of $K$ intersecting $f^{-1}(p)$. 
  
  Originally, $\mu$ was defined in \cite{musin_homotopy_2025} in the case of simplicial maps between triangulated spheres with the goal of generalizing the Sperner's lemma.
    
  The following theorem is the main result of this paper. 
    
  \begin{reptheorem}{mainthm}
    Let $K_1, K_2$ be simplicial complexes such that $|K_1|\cong M$ for an oriented closed $3$-manifold $M$ and $|K_2|\cong S^2$. Let $s$ be a triangle of $K_2$ and $f\colon K_1\rightarrow K_2$ be a simplicial map. If $H(f)\neq 0$, then $\mu(f,s)\geq 9$.
    
    For each $d\in \Z\setminus\{0\}$ a simplicial map $f\colon S^3\rightarrow S^2$ is constructed, such that $H(f)=d$ and $\mu(f,s)= 9$.\footnote{This claim may look counterintuitive, because it follows that the same 9-tetrahedral triangulated solid torus may be embedded in 
    $S^3$ (or $\R^3$), such that two "horizontal" circles in the solid torus have an arbitrary linking number. The explanation is that the triangulation of the torus does not have to be rectiliniear in any possible kind of sense, so there is no upper bound on the linking number in terms of the number of tetrahedra in the triangulation of the solid torus. One can imagine a three component $(3k,3)$-torus link being a part of the $1$-skeleton of a 9-tetrahedral triangulation of the solid torus, which realizes the linking number $k$.}
  \end{reptheorem}

   The main theorem can be interpreted as a statement on the complexity of the preimage $f^{-1}(p)$ of an internal point $p$ of a triangle $|s|$ in $S^2$, which is analogous to Eilenberg's theorem.

   It turns out that the map $\eta\colon  S^3_{12}\rightarrow S^{2}_4$ constructed by Madahar and Sarkaria has $\mu(\eta, s)=9$ for all $s\in \maxx(S^{2}_4)$, and $S^3_{12}$ consists of $9\cdot 4 = 36$ tetrahedra.

  \begin{corollary}
        The map $\eta\colon  S^3_{12}\rightarrow S^{2}_4$ is minimal among simplicial maps homotopic to the Hopf fibration not only in terms of the number of vertices in the triangulation of $S^3$ but also in terms of the number of tetrahedra.
  \end{corollary}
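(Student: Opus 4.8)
The plan is to read the corollary off directly from Theorem~\ref{mainthm} together with the minimal size of a triangulation of $S^2$. First I would note that if $f\colon K_1\to K_2$ is any simplicial map homotopic to the Hopf fibration, then necessarily $|K_1|\cong S^3$, $|K_2|\cong S^2$, and $H(f)=H(\eta)=1\neq 0$ since the Hopf invariant is a homotopy invariant. Hence the hypotheses of Theorem~\ref{mainthm} are met, and $\mu(f,s)\geq 9$ for \emph{every} triangle $s$ of $K_2$.

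Next I would convert this fiberwise bound into a bound on the total number of $3$-simplices. A simplicial map cannot send a $3$-simplex onto two distinct triangles, so the sets $\{\sigma\in\maxx(K_1): f(\sigma)=s\}$, indexed by $s\in\maxx(K_2)$, are pairwise disjoint subsets of $\maxx(K_1)$; therefore
\[
|\maxx(K_1)|\;\geq\;\sum_{s\in\maxx(K_2)}\mu(f,s)\;\geq\;9\,|\maxx(K_2)|.
\]
It remains to recall that a triangulation of $S^2$ has at least $4$ triangles: writing $V,E,F$ for its numbers of vertices, edges and triangles, the relations $V-E+F=2$ and $2E=3F$ give $F=2(V-2)$, and $V\geq 4$ because $S^2$ admits no triangulation on fewer than $4$ vertices, whence $F\geq 4$. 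Consequently $|\maxx(K_1)|\geq 36$.

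Finally I would compare with $\eta$ itself: the triangulation $S^3_{12}$ has $9\cdot 4=36$ tetrahedra, so the bound $|\maxx(K_1)|\geq 36$ is attained and $\eta$ is minimal in the number of tetrahedra among simplicial maps homotopic to the Hopf fibration. The minimality in the number of vertices is the previously known statement, deduced from Eilenberg's theorem via the fact that the preimage of each vertex of $S^2_4$ must contain at least three vertices of the domain; I would simply cite it. The only point requiring care is the disjointness used in the displayed inequality — that no tetrahedron is counted for two different triangles — which is immediate from the definition of a simplicial map, so beyond invoking Theorem~\ref{mainthm} and the one-line Euler-characteristic estimate I do not expect a genuine obstacle.
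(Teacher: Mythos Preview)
Your argument is correct and follows exactly the approach the paper intends: apply Theorem~\ref{mainthm} to get $\mu(f,s)\geq 9$ for every triangle $s$, sum over the (disjoint) preimage classes, and combine with the fact that a triangulation of $S^2$ has at least $4$ triangles to obtain the bound $36$, which $\eta$ attains. The paper leaves the Euler-characteristic step and the disjointness of the fibers implicit, but your explicit treatment of both is welcome and introduces no new ideas beyond what the paper uses.
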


   The examples for which the bound is sharp are slight modifications of the maps constructed by Madahar in \cite{madahar_simplicial_2002}.

   It is worth noting that the minimality of Madahar and Sarkaria's map $\eta$ in terms of the number of tetrahedra \textit{in the class of (semi)-simplicial triangulations of circle bundles} was already observed in \cite{mnev_minimal_2020}, \cite{panina_minimal_2024}.


  \section{Main theorem}

  \subsection{The lower bound}

  Consider simplicial complexes $K_1$ and $K_2$ such that $|K_1|\cong M$ for some oriented closed $3$-dimensional manifold $M$ and $|K_2|\cong S^2$. Let $s=ABC$ be a $2$-simplex in $|K_2|$. Let $f\colon K_1 \rightarrow K_2$ be a simplicial map between complexes $K_1$ and $K_2$.Fix $p\in \interior(s)$. The $1$-cycle $f^{-1}(p)$ is a union of oriented circles $C_1,\dots, C_k$. By a \textbf{segment} $c_i(\sigma)$ of $C_i$ we mean its intersection with a $3$-simplex $|\sigma|$, which is a segment in $|\sigma|$ (if it is not empty).

    Let $S_i=\{\sigma\in \maxx(K_1)| |\sigma|\cap C_i\neq \varnothing\},\, V_i=\{v\in \vertex(K_1)|\exists \sigma\in S_i\colon v\in \sigma\}$.
    Evidently, all $S_i$'s are disjoint. Orientation on $C_i$ induces a cyclic ordering on $S_i$. For $\sigma\in S_i$, denote the next simplex after $\sigma$ with respect to that ordering by $\nexxt(\sigma)$ and  denote the previous simplex by $\previous(\sigma)$.

    Only one edge of any $3$-simplex $\sigma$ in $K$ has both its vertices sent by $f$ to the same vertex of $K_2$. We will call such an edge the \textbf{pivot edge} of $\sigma$. If $\sigma\in S_i$, then its pivot edge is parallel to the segment of $C_i$ that lies in $\sigma$, and the pivot edge inherits the orientation from the orientation of the segment. If the pivot edge $[v,w]$ of $\sigma$ is oriented from $v$ to $w$, we call the vertices the \textbf{initial} and the \textbf{terminal} vertices of $\sigma$, respectively. Therefore, we can define the function $\initial_i\colon S_i \rightarrow V_i$ (resp., $\terminal_i\colon S_i \rightarrow V_i$) which maps $\sigma$ to the initial (resp., terminal) vertex of $\sigma$. The following identity evidently holds for $\sigma\in S_i$:
    \begin{equation}\label{decompose}
        \sigma=(\sigma\cap\nexxt(\sigma))\sqcup \initial_i(\sigma)=(\sigma\cap\previous(\sigma))\sqcup \terminal_i(\sigma). \tag{$\ast$}
    \end{equation}
    
    For $\sigma\in S_i$, let $a_i(\sigma)$ and $b_i(\sigma)$ be the intersection points of $f^{-1}(p)$ with the triangles $\sigma\cap\previous(\sigma)$ and $\sigma\cap\nexxt(\sigma)$ respectively. Then $c_i(\sigma)=[a_i(\sigma), b_i(\sigma)]$.

    Let $p\in \interior(s)$ be the barycenter of $s$ and $q\in \interior(s)$ be a point which does not lie on the medians of $s$. We are ready to formulate and prove the preliminary lemmas.
    
    \begin{lemma}\label{surjectivity}
        If $v\in V_i$ does not lie in the image of $\initial_i$, then it is the common vertex of all simplices of $S_i$. Moreover, there exists a $2$-chain $D_i$ such that $D_i\cap f^{-1}(q)=\varnothing$, $\partial D_i=C_i$.
    \end{lemma}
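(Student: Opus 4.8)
The plan is to prove the first assertion by propagating the vertex $v$ around the cyclic order of $S_i$, and then to realise $D_i$ as an explicit cone over $v$. For the first assertion, suppose $v\in V_i$ does not lie in the image of $\initial_i$. Choose $\sigma\in S_i$ with $v\in\sigma$, which is possible since $v\in V_i$. As $v\neq\initial_i(\sigma)$, the identity~(\ref{decompose}) forces $v\in\sigma\cap\nexxt(\sigma)$, hence $v\in\nexxt(\sigma)$, and $v$ is again not the initial vertex of $\nexxt(\sigma)\in S_i$. Iterating, and using that $\nexxt$ cyclically permutes the finite set $S_i$, one gets $v\in\tau$ for every $\tau\in S_i$; thus $v$ is the common vertex of all simplices of $S_i$.

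For the second assertion, list $S_i=\{\sigma_1,\dots,\sigma_n\}$ in the cyclic order induced by the orientation of $C_i$, so that $\sigma_{j+1}=\nexxt(\sigma_j)$ with indices read mod $n$. Then $b_i(\sigma_j)=a_i(\sigma_{j+1})$, since the point where $C_i$ leaves $\sigma_j$ equals the point where it enters $\sigma_{j+1}$, and $C_i=\sum_{j=1}^n[a_i(\sigma_j),b_i(\sigma_j)]$ as an oriented $1$-cycle. Using the common vertex $v$ as apex, set $T_j\eqdef[v,a_i(\sigma_j),b_i(\sigma_j)]$, a (possibly degenerate) $2$-simplex contained in $|\sigma_j|$ because $v\in\sigma_j$, and put $D_i\eqdef\sum_{j=1}^nT_j$. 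Then
\[
    \partial D_i=\sum_{j=1}^n\bigl([a_i(\sigma_j),b_i(\sigma_j)]-[v,b_i(\sigma_j)]+[v,a_i(\sigma_j)]\bigr)=C_i+\sum_{j=1}^n\bigl([v,a_i(\sigma_j)]-[v,b_i(\sigma_j)]\bigr)=C_i,
\]
the correction sum telescoping to $0$ around the cycle precisely because $b_i(\sigma_j)=a_i(\sigma_{j+1})$.

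It remains to check that $D_i\cap f^{-1}(q)=\varnothing$. The restriction of $f$ to $|\sigma_j|$ is affine; it sends both $a_i(\sigma_j)$ and $b_i(\sigma_j)$ to the barycenter $p$ of $s$, and it sends $v$ to a single vertex $f(v)\in\{A,B,C\}$, the same for every $j$. Hence $f(T_j)=\conv(f(v),p)$ is the segment joining the vertex $f(v)$ of $s$ to the barycenter of $s$, which lies on the median of $s$ issuing from $f(v)$; therefore $f(D_i)$ is contained in that single median. Since $q$ was chosen not to lie on the medians of $s$, we conclude $q\notin f(D_i)$, and so $D_i\cap f^{-1}(q)=\varnothing$, as required.

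The only delicate point is the orientation bookkeeping: the segments $c_i(\sigma_j)=[a_i(\sigma_j),b_i(\sigma_j)]$ must be oriented coherently with $C_i$ so that the identity $b_i(\sigma_j)=a_i(\sigma_{j+1})$ holds literally and the correction terms in $\partial D_i$ telescope to zero; this is already encoded in the definitions recalled in the excerpt. Granting it, the rest of the argument is forced by the affineness of $f$ on each simplex together with the combinatorial identity~(\ref{decompose}).
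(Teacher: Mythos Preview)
Your proof is correct and follows essentially the same approach as the paper's: propagate $v$ around the cycle via~\eqref{decompose} to show it lies in every $\sigma\in S_i$, then cone $C_i$ over $v$ inside the union of the $|\sigma_j|$, compute the boundary by telescoping, and use that $f(T_j)\subset[f(v),p]$ lies on a median to avoid $q$. The paper additionally observes that $v$ cannot be a terminal vertex either (hence is the unique vertex of $V_i$ over $f(v)$), but this is not needed for the lemma as stated and your argument goes through without it.
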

    \begin{proof}
        As $v\in V_i$, $v\in \sigma$ for some $\sigma\in S_i$. As $\forall \tau\in S_i\colon v\neq in_i(\tau)$, by \eqref{decompose} we have $\forall \tau\in S_i\colon (v\in \tau \Longrightarrow v\in \nexxt(\tau))$. Therefore $\forall \sigma\in S_i\colon v\in V_i$. 

        If $v=\terminal_i(\sigma)$ for some $\sigma\in S_i$, then by \eqref{decompose} we have $v\notin \previous(\sigma)$, which contradicts the previous paragraph. As $v$ does not lie in the images of $in_i$ and $ter_i$, it does not lie in the pivot edge of any $\sigma$. Therefore, $v$ is the only vertex of $V_i$ sent by $f$ to $f(v)$.

        For every $\sigma\in S_i$ consider the triangle $d_i(\sigma)=\conv(c_i(\sigma), v)$. Orient it so that the orientation on $c_i(\sigma)$ coincides with the orientation induced from $d_i(\sigma)$. Consider the $2$-chain $D_i=\sum_{\sigma\in S_i} d_i(\sigma)$. As support of $D_i$ is a subset of $f^{-1}([p, f(v)])$, we have that $D_i\cap f^{-1}(q)=\varnothing$. We also have
    \begin{equation}\label{di}
        \partial D_i=\sum_{\sigma\in S_i} \partial d_i(\sigma) = \sum_{\sigma\in S_i} [v, a_i(\sigma)] + c_i(\sigma) + [b_i(\sigma), v].\tag{$\ast\ast$}
    \end{equation}

    As $[v, a_i(\sigma)]=[v, b_i(\previous(\sigma))]=-[b_i(\previous(\sigma)), v]$ and $[b_i(\sigma), v]=[a_i(\nexxt(\sigma)), v]=$ \\ $=-[v, a_i(\nexxt(\sigma))]$, the first and the third terms of the expression under the sum sign in \eqref{di} cancel out when summed over $\sigma\in S_i$. Then we have $\partial D_i=\sum_{\sigma\in S_i}  c_i(\sigma) = C_i$.
    \end{proof}

    The proof of the next lemma is similar to the proof of the previous one, but we nevertheless describe it in detail for clarity.
    
    \begin{lemma} \label{3repr}
        Let $\alpha$ be a vertex of $s$, i.e. $\alpha=A,B,C$. If no more than two vertices of $V_i$ are sent by $f$ to $\alpha$, then there exists a $2$-chain $D_i$ such that $D_i\cap f^{-1}(q)=\varnothing$, $\partial D_i=C_i$.
    \end{lemma}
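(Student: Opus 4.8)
The plan is to mimic the construction in Lemma \ref{surjectivity}, but now spanning each segment $c_i(\sigma)$ not from a single apex vertex $v$, but from the pair (or singleton) of vertices of $V_i$ that map to $\alpha$. Write $\alpha = A$, say, and let $V_i^A = \{v \in V_i : f(v) = A\}$; by hypothesis $|V_i^A| \le 2$. First I would handle the case $|V_i^A| \le 1$: if it is empty, then no simplex of $S_i$ has a vertex over $A$, which is impossible since each $\sigma \in S_i$ is a tetrahedron whose image is the triangle $s = ABC$ and hence must contain a vertex over each of $A$, $B$, $C$; if it is a singleton, Lemma \ref{surjectivity} already gives the conclusion (the unique vertex over $A$ cannot be in the image of $\initial_i$ or $\terminal_i$ for the same reason as before, so it is the common apex). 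So assume $V_i^A = \{v, w\}$ with $v \ne w$.

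Next I would stratify $S_i$ according to which of $v, w$ a simplex contains. Every $\sigma \in S_i$ contains exactly one vertex over $A$ (the pivot edge of $\sigma$ joins the two vertices over the same vertex of $K_2$, and since $f(\sigma) = s$ has three vertices, exactly one vertex of $\sigma$ lies over each of $A, B, C$ unless that vertex is an endpoint of the pivot edge — in which case two vertices of $\sigma$ lie over that one vertex of $K_2$; in any event $\sigma$ meets $V_i^A$ in at least one point and, being a tetrahedron with at most $4$ vertices covering $3$ images, in at most two). Using the decomposition \eqref{decompose}, the subset $S_i^v = \{\sigma : v \in \sigma\}$ is closed under $\nexxt$ except possibly where $\sigma$ has $v = \initial_i(\sigma)$, and closed under $\previous$ except where $v = \terminal_i(\sigma)$; and symmetrically for $w$. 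Since $S_i$ is a single cycle under $\nexxt$, this means $S_i$ breaks into finitely many consecutive arcs, alternately "over $v$" and "over $w$", with the transition from a $v$-arc to a $w$-arc happening exactly at a simplex $\sigma$ whose pivot edge is $[v,w]$ or $[w,v]$ (i.e. whose pivot edge runs between the two apexes). On each maximal arc I would build the fan $2$-chain exactly as in \eqref{di}: for $\sigma$ in a $v$-arc set $d_i(\sigma) = \conv(c_i(\sigma), v)$, for $\sigma$ in a $w$-arc set $d_i(\sigma) = \conv(c_i(\sigma), w)$, each oriented compatibly with $c_i(\sigma)$. Summing, the telescoping argument from Lemma \ref{surjectivity} cancels all the $[v, a_i(\sigma)] + [b_i(\sigma), v]$ (resp. with $w$) terms within a single arc, leaving at each arc boundary a single leftover edge $[v, b_i(\sigma)] $ paired against $[a_i(\nexxt(\sigma)), w]$ — but $a_i(\nexxt(\sigma)) = b_i(\sigma)$, so these leftover terms assemble into the edges $[v, b_i(\sigma)] + [b_i(\sigma), w] = [v,w]$ traversed along the triangle $\sigma \cap \nexxt(\sigma)$; to kill these I would additionally include, for each transition simplex $\sigma$, the triangle $\conv(v, w, b_i(\sigma))$ with appropriate orientation, or equivalently observe that adjacent leftover segments $[v,w]$ and $[w,v]$ at consecutive transitions along the cycle come in cancelling pairs because the arc structure alternates. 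Either way, the resulting $2$-chain $D_i$ has $\partial D_i = C_i$ and its support lies in $f^{-1}(\conv(p, A, B) \cup \{p\})$ — a region disjoint from the generic point $q$, provided $q$ is chosen in $\interior(s)$ off the medians, since $\conv(p, A)$ and the triangle $\conv(p,v\text{-image},w\text{-image})$ all avoid $q$.

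The main obstacle I anticipate is the bookkeeping at the arc transitions: making precise that $S_i$ really does decompose into alternating $v$-arcs and $w$-arcs (ruling out, for instance, a simplex containing both $v$ and $w$ in a way that breaks the alternation, which is exactly where $|V_i^A| = 2$ rather than $\ge 3$ is used), and verifying that the leftover boundary edges at the transitions either cancel in pairs or are killed by a bounded number of extra triangles whose support still avoids $f^{-1}(q)$. A secondary point to get right is the support estimate: one must check that every triangle used — the fans $\conv(c_i(\sigma), v)$, $\conv(c_i(\sigma), w)$, and the filler triangles $\conv(v, w, b_i(\sigma))$ — maps under $f$ into the union of the sub-triangle $\conv(p, A, \cdot)$ of $s$ and hence misses the chosen off-median point $q$; this is what lets the lemma feed into the eventual contradiction that produces the bound $\mu(f,s) \ge 9$.
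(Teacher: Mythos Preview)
Your plan is essentially the paper's own argument: the paper partitions $S_i$ into $U$ (contains only $u$), $V$ (contains only $v$), and the transition simplices $R_u, R_v$ (contain both, distinguished by $\initial_i$), observes the same alternation you describe to get $|R_u|=|R_v|$, and then sets $d_i(\sigma)=\conv(c_i(\sigma),f^{-1}(\alpha)\cap\sigma)$---a triangle on $U\sqcup V$ and a \emph{trapezoid} on $R_u\sqcup R_v$---so that the leftover boundary is exactly $|R_u|$ copies of $[v,u]$ plus $|R_v|$ copies of $[u,v]$, which cancel. Your fan-triangle-plus-filler-triangle at a transition simplex is the same trapezoid cut along a diagonal, so the two constructions agree. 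Two small slips to fix: the equation $[v,b_i(\sigma)]+[b_i(\sigma),w]=[v,w]$ is false at the chain level (only homologous), so your ``option (b)'' does not stand alone---you genuinely need the filler triangles, \emph{after which} the residual edges $\pm[v,w]$ cancel; and the support of $D_i$ lies in $f^{-1}([p,\alpha])$, not $f^{-1}(\conv(p,A,B))$ (every vertex you cone to maps to $\alpha$, and every $a_i(\sigma),b_i(\sigma)$ maps to $p$).
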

    
    \begin{proof}
        Any $\sigma\in S_i$ is sent by $f$ to $s$ surjectively, so for some $v\in V_i$ $f(v)=\alpha$. WLOG the conditions of the previous lemma do not hold, so $v=\initial_i(\sigma)$ for some $\sigma\in S_i$. Then for $u=\terminal_i(\sigma)\neq v$ we also have $f(u)=\alpha$. Thus we have exactly two vertices $v$ and $u$ in $V_i$ which are sent by $f$ to $\alpha$. Subdivide $S_i$ into four subsets:
        \begin{align}
            S_i &=U\sqcup V \sqcup R_u \sqcup R_v, \notag\\
            U &=\{\sigma \in S_i| u\in \sigma, \, v \notin \sigma\}, \notag\\
            V &=\{\sigma \in S_i| v\in \sigma, \, u \notin \sigma\}, \notag\\
            R_u &=\{\sigma \in S_i| \{v,u\}\subset \sigma, \, \initial_i(\sigma)=u\}, \notag\\
            R_v &=\{\sigma \in S_i| \{v,u\}\subset \sigma, \, \initial_i(\sigma)=v\}. \notag
        \end{align}
    
        Notice that
        \begin{align}
            \sigma \in U &\Longrightarrow \nexxt(\sigma) \in U\sqcup R_u, \notag\\
            \sigma \in V &\Longrightarrow \nexxt(\sigma) \in V\sqcup R_v, \notag\\
            \sigma \in R_u &\Longrightarrow \nexxt(\sigma) \in V, \notag\\
            \sigma \in R_v &\Longrightarrow \nexxt(\sigma) \in U. \notag
        \end{align}

        It follows that $|R_u|=|R_v|$. For each $\sigma\in S_i$ consider $d_i(\sigma)=\conv(c_i(\sigma), f^{-1}(\alpha))$. If $\sigma\in U\sqcup V$ then $d_i(\sigma)$ is a triangle, otherwise it is a trapezoid, which we arbitrarily subdivide into two triangles. We orient $d_i(\sigma)$ so that the orientation on $c_i(\sigma)$ coincides with the orientation induced from $d_i(\sigma)$.

        Take $D_i=\sum_{\sigma\in S_i} d_i(\sigma)$. As support of $D_i$ is a subset of $f^{-1}([p, \alpha])$\footnote{The support of $D_i$ is $f^{-1}([p,\alpha])$ intersected with the union of simplices $|\sigma|$ of $S_i$. One could call it the \emph{regular preimage}.}, we have that $D_i\cap f^{-1}(q)=\varnothing$. We also have        
        \begin{align} \label{dii}  \tag{$\ast\ast\ast$}
            \partial D_i &= \sum_{\sigma\in S_i} \partial d_i(\sigma) \notag\\ &= 
            \sum_{\sigma\in U} \left([u, a_i(\sigma)] + c_i(\sigma) + [b_i(\sigma), u]\right) + \sum_{\sigma\in V} \left([v, a_i(\sigma)] + c_i(\sigma) +  [b_i(\sigma), v]\right) \notag\\ &+
            \sum_{\sigma\in R_u} \left([u, a_i(\sigma)] + c_i(\sigma) + [b_i(\sigma), v] + [v,u]\right)  + \sum_{\sigma\in R_v} \left([v, a_i(\sigma)] + c_i(\sigma) + [b_i(\sigma), u] + [u,v]\right). \notag
        \end{align}

    As $[w, a_i(\sigma)]=[w, b_i(\previous(\sigma))]=-[b_i(\previous(\sigma)), w]$ and $[b_i(\sigma), w]=[a_i(\nexxt(\sigma)), w]=$ \\ $=-[w, a_i(\nexxt(\sigma))]$ ($w\in\{u,v\}$), the first and the third terms of the expressions under the sum signs in \eqref{dii} cancel out. As $|R_u|=|R_v|$, the fourth terms of the expressions in the last two sums also cancel out. So $\partial D_i = \sum_{\sigma\in S_i} c_i(\sigma)=C_i$.
\end{proof}

  Let us formulate and prove the main theorem.

  \begin{theorem}\label{mainthm}
    Let $K_1, K_2$ be simplicial complexes such that $|K_1|\cong M$ for an oriented closed $3$-manifold $M$ and $|K_2|\cong S^2$. Let $s$ be a triangle of $K_2$ and $f\colon K_1\rightarrow K_2$ be a simplicial map. If $H(f)\neq 0$, then $\mu(f,s)\geq 9$.
  \end{theorem}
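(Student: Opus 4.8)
The plan is to feed the two preliminary lemmas into the classical description of the Hopf invariant as a linking number of preimages of generic values. The first step is to record, in the present notation, that since $p$ and $q$ are distinct interior points of the top simplex $s$, the oriented $1$-cycles $f^{-1}(p)$ and $f^{-1}(q)$ are disjoint, and that $H(f)$ equals the algebraic intersection number $D\cdot f^{-1}(q)$, where $D$ is any $2$-chain in $M$ with $\partial D=f^{-1}(p)$. The hypothesis that $H(f)$ is defined forces $f^{*}$ of the generator of $H^2(S^2)$ to vanish in $H^2(M)$, i.e.\ $f^{-1}(p)$ is null-homologous, so such a $D$ exists; and $D\cdot f^{-1}(q)$ is independent of the choice of $D$ because $f^{-1}(q)$, being homologous to $f^{-1}(p)$ (via $f^{-1}$ of an arc from $q$ to $p$ missing $p$), is also null-homologous, so the difference of two choices of $D$ is a $2$-cycle hitting $f^{-1}(q)$ with intersection number $0$. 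I would also check that the $2$-chains produced by Lemmas~\ref{surjectivity} and~\ref{3repr}, assembled from Euclidean triangles and trapezoids inside single tetrahedra $|\sigma|$, are genuine $2$-chains in $M$, and that with the orientation conventions fixed there their boundaries add up to exactly $\sum_i C_i=f^{-1}(p)$.

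Next comes the contrapositive argument together with the count. Write $f^{-1}(p)=C_1\sqcup\dots\sqcup C_k$. If for every $i$ there existed a $2$-chain $D_i$ with $\partial D_i=C_i$ and $D_i\cap f^{-1}(q)=\varnothing$, then $D\eqdef\sum_i D_i$ would satisfy $\partial D=f^{-1}(p)$ and $D\cap f^{-1}(q)=\varnothing$, whence $H(f)=D\cdot f^{-1}(q)=0$. So under $H(f)\neq 0$ there is an index $i_0$ for which no such $D_{i_0}$ exists. The contrapositive of Lemma~\ref{surjectivity} then forces $\initial_{i_0}\colon S_{i_0}\to V_{i_0}$ to be surjective, so $|S_{i_0}|\geq|V_{i_0}|$; and the contrapositive of Lemma~\ref{3repr}, applied in turn to each vertex $\alpha\in\{A,B,C\}$ of $s$, forces at least three vertices of $V_{i_0}$ to be sent by $f$ to each of $A$, $B$, $C$. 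Since every tetrahedron of $S_{i_0}$ is mapped by $f$ onto the triangle $s$, each of its vertices goes to a vertex of $s$, so $V_{i_0}$ is the disjoint union of $V_{i_0}\cap f^{-1}(A)$, $V_{i_0}\cap f^{-1}(B)$, $V_{i_0}\cap f^{-1}(C)$, each of size at least $3$; hence $|V_{i_0}|\geq 9$ and $|S_{i_0}|\geq 9$. Finally, a tetrahedron meeting $C_{i_0}\subset f^{-1}(p)$ with $p\in\interior(s)$ necessarily maps onto $s$, so $S_{i_0}\subseteq\{\sigma\in\maxx(K_1)\mid f(\sigma)=s\}$, and as the $S_i$ are pairwise disjoint this yields $\mu(f,s)\geq|S_{i_0}|\geq 9$.

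The step I expect to be the main obstacle is the first one: making precise, for an arbitrary oriented closed $3$-manifold $M$ rather than only for $S^3$, that the number "$H(f)$" occurring in the hypothesis is really computed by $D\cdot f^{-1}(q)$ — that $f^{-1}(p)$ bounds, that the resulting intersection pairing is independent both of the bounding $2$-chain and of the generic (off-medians) choice of $q$, and that it agrees with the chosen definition of the Hopf invariant with its sign convention. Everything after that identification is the short bookkeeping above, the only delicate point being to keep the orientations of the circles $C_i$ and of $D=\sum_i D_i$ consistent so that $\partial D=f^{-1}(p)$ holds on the nose.
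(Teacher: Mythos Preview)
Your proposal is correct and follows exactly the paper's approach: assume each $C_i$ admits a $D_i$ disjoint from $f^{-1}(q)$, deduce $H(f)=\lk(f^{-1}(p),f^{-1}(q))=0$, and otherwise use the contrapositives of the two lemmas to get surjectivity of $\initial_{i_0}$ and at least three preimages over each of $A,B,C$, hence $\mu(f,s)\geq |S_{i_0}|\geq |V_{i_0}|\geq 9$. Your first paragraph spells out the identification of $H(f)$ with the linking number in more detail than the paper (which simply invokes it), but the actual argument is the same.
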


  \begin{proof}
     Assume that all $C_i$'s satisfy the conditions of either lemma~\ref{surjectivity} or lemma~\ref{3repr}. Then for any $i$ there is a $2$-chain $D_i$ such that $\partial D_i=C_i$, $D_i\cap f^{-1}(q)=\varnothing$. Consider $D=\sum_i D_i$, then $H(f)=\lk (f^{-1}(p), f^{-1}(q))=\gamma(D, f^{-1}(q))=0$.

     So for some $C_i$ neither lemmas apply. For such $i$ we have $|S_i|\geq |\Image(\initial_i)|=|V_i|\geq 3\cdot 3=9$. 
  \end{proof}

\subsection{Exactness of the bound}
Recall that \emph{Seifert fiber spaces} are a particular type of (not necessarily locally trivial) fiber spaces with the total space a 3-manifold and the base a 2-manifold, such that the local triviality condition is violated in a finite number of \textit{exceptional fibers}. $S^3$ can be realized as a Seifert fiber space over $S^2$ in many different ways. For definitions, context and the full classification of Seifert fiber space structures on $S^3$, see \cite{seifert_topologie_1933}, \cite{birman_seifert_1980}, \cite{fomenko_algorithmic_1997}. 

In particular, represent $S^3$ as a union of two solid tori $M$ and $N$ glued by their common boundary $\partial M=\partial N= T$ so that their meridians intersect on the boundary torus $T$ by a single point. Foliate $T$ by $(n,1)$-torus knots ($n\geq 1$). This fibering can be naturally continued into the interiors of the solid tori, so that one of the tori (say, $M$) is fibered locally trivially, the other has (no more than) one exceptional fiber, and the fiber space is $S^2$. The second solid torus is fibered locally trivially if and only if $k=1$, and in this case the projection map is, in fact, the Hopf fibration. All Seifert fiber space structures on $S^3$ with (no more than) one exceptional fiber arise this way.

As we already noted, an example of a simplicial map $\eta\colon S^3_9\rightarrow S^2_4$ with $H(f)=1$ and $\mu(f,s)=9$ was already constructed in~\cite{madahar_sarkaria_simplicial_2000}, and the map is a triangulation of the Hopf fibration.

The maps $\xi=\xi_n\colon S^3_{6n}\rightarrow S^2_4=ABCD, \, n\geq 2$ constructed by Madahar in \cite{madahar_simplicial_2002} are the triangulations of the projection maps of the Seifert fiberings considered above. In all the maps $\xi_n$, the single exceptional fiber is the preimage of the vertex $D$, and the solid torus $M$, which is fibered locally trivially, is the preimage of the triangle $ABC$.  To construct the maps $\zeta_n\colon  S^3\rightarrow S^2$ with $H(\zeta_n)=n, \, \mu(\zeta_n, s)=9$, we will subdivide $S^3_{6n}$ and further define the simplicial map on the newly introduced vertices.

The solid torus $M$ of $S^3_{6n}$ consists of $2n-1$ triangular prisms cyclically glued by their base triangles. We only change the triangulation of $M$ by introducing new vertices in the interior of $S^3_{6n}$, so that the triangulation on the boundary $T=\partial M$ does not change. We introduce $2n-4$ nested triangulated solid tori $M=M_0\supset M_1\supset \dots \supset M_{2n-4}$, so that $M_i$ consists of $2n-1-i$ triangular prisms glued together cyclically\footnote{We cannot introduce yet another torus in a similar pattern because it would be composed of two prisms with the same sets of vertices, which would contradict the condition that $S^3$ is triangulated as a simplicial complex.}. Each vertex $v$ of $M_i$ lies on the segment between a vertex $u$ of $M$ and the barycenter of the only base triangle containing $u$. Define $\zeta_n(v)=\xi_n(u)$. One can also triangulate the spaces between the consecutive nested tori without introducing any more vertices.


It is easy to see that $\mu(\zeta_n, ABC)=9$ and $[\zeta_n]=[\xi_n]\in \pi_3(S^2)$, so $H(\zeta_n)=H(\xi_n)=n$.


\subsection*{Acknowledgements}
   The work of Sergei Vad. Fomin on the lower bound theorem was supported by the Ministry of Science and Higher Education of the Russian Federation (agreement 075-15-2025-344 dated 29/04/2025 for Saint Petersburg Leonhard Euler International Mathematical Institute at PDMI RAS). The work of Mikhail Bludov on the upper bound example is supported by the "Priority 2030" strategic academic leadership program. The authors thank Timur Shamazov for fruitful discussions. The authors thank the Summer Research Programme at MIPT -- LIPS-25 for the opportunity to work on this and other problems.

  \bibliographystyle{alphaurl}
   \bibliography{reference}

@article{eilenberg_continuous_1940,
	title = {On {Continuous} {Mappings} of {Manifolds} into {Spheres}},
	volume = {41},
	issn = {0003-486X},
	url = {https://www.jstor.org/stable/1968739},
	doi = {10.2307/1968739},
	number = {3},
	urldate = {2025-11-04},
	journal = {Annals of Mathematics},
	author = {Eilenberg, S.},
	year = {1940},
	pages = {662--673},
}

@article{madahar_simplicial_2002,
	title = {Simplicial maps from the 3-sphere to the 2-sphere},
	url = {https://www.academia.edu/566039/Simplicial_maps_from_the_3_sphere_to_the_2_sphere},
	abstract = {Simplicial maps from the 3-sphere to the 2-sphere},
	urldate = {2025-11-04},
	journal = {Adv. Geom},
	author = {Madahar, K.V.},
	month = jan,
	year = {2002},
	file = {Snapshot:C\:\\Users\\sfomi\\Zotero\\storage\\JNYQZICC\\Simplicial_maps_from_the_3_sphere_to_the_2_sphere.html:text/html},
}

@article{madahar_sarkaria_simplicial_2000,
	title = {A {Minimal} {Triangulation} of the {Hopf} {Map} and its {Application}},
	volume = {82},
	issn = {1572-9168},
	url = {https://doi.org/10.1023/A:1005102800486},
	doi = {10.1023/A:1005102800486},
	abstract = {We give a minimal triangulation η: S123→S42of the Hopf map h: S3→S2 and use it to obtain a new construction of the 9-vertex complex projective plane.},
	language = {en},
	number = {1},
	urldate = {2025-11-04},
	journal = {Geometriae Dedicata},
	author = {Madahar, K.V. and Sarkaria, K.S.},
	month = oct,
	year = {2000},
	keywords = {complex projective plane, Hopf map, triangulations of manifolds., Villarceau circles},
	pages = {105--114},
	file = {Full Text PDF:C\:\\Users\\sfomi\\Zotero\\storage\\P2QVBAH7\\Madahar и Sarkaria - 2000 - A Minimal Triangulation of the Hopf Map and its Application.pdf:application/pdf},
}

@article{mnev_minimal_2020,
	title = {Minimal {Triangulations} of {Circle} {Bundles}, {Circular} {Permutations}, and the {Binary} {Chern} {Cocycle}},
	volume = {247},
	issn = {1072-3374, 1573-8795},
	url = {https://link.springer.com/10.1007/s10958-020-04832-y},
	doi = {10.1007/s10958-020-04832-y},
	abstract = {We investigate a PL topology question: which circle bundles can be triangulated over a given triangulation of the base? The question gets a simple answer emphasizing the role of minimal triangulations encoded by local systems of circular permutations of vertices of the base simplices. The answer is based on an experimental fact: the classical Huntington transitivity axiom for cyclic orders can be expressed as the universal binary Chern cocycle.},
	language = {en},
	number = {5},
	urldate = {2025-11-04},
	journal = {Journal of Mathematical Sciences},
	author = {Mn{\"e}v, N.},
	month = jun,
	year = {2020},
	pages = {696--710},
	file = {PDF:C\:\\Users\\sfomi\\Zotero\\storage\\VS5ZCXS2\\Mnëv - 2020 - Minimal Triangulations of Circle Bundles, Circular Permutations, and the Binary Chern Cocycle.pdf:application/pdf},
}

@misc{musin_homotopy_2025,
	title = {Homotopy groups and quantitative {Sperner}-type lemma},
	url = {http://arxiv.org/abs/2007.08715},
	doi = {10.48550/arXiv.2007.08715},
	abstract = {We consider a generalization of Sperner's lemma for a triangulation \$T\$ of \$(m+1)\$-discs \$D\$ whose vertices are colored in \$n+2\$ colors. A proper coloring of \$T\$ on the boundary of \$D\$ determines a simplicial mapping \$f:S{\textasciicircum}m {\textbackslash}to S{\textasciicircum}n\$ and the element \$x=[f]\$ in \${\textbackslash}pi\_m(S{\textasciicircum}n)\$. For any \$x\$ in this homotopy group we define a non-negative integer \${\textbackslash}mu(x)\$. For some cases this invariant can be found explicitly. Namely, if \$m=n\$ then this number is the Brouwer degree of the mapping \$f\$. For the case \$m=3, n=2\$ we found a lower bound for \${\textbackslash}mu(x)\$, where \$x\$ is the Hopf invariant, and proved that \${\textbackslash}mu(1)={\textbackslash}mu(2)=9\$. The main result of this paper is the theorem that the number of fully colored \$n\$-simplexes in \$T\$ is not less than \${\textbackslash}mu([f])\$. To prove this theorem we use a generalization of Pontryagin's theorem for manifolds with respect to their boundaries.},
	urldate = {2025-11-04},
	publisher = {arXiv},
	author = {Musin, O.R.},
	month = mar,
	year = {2025},
	note = {arXiv:2007.08715 [math]},
	keywords = {Mathematics - Algebraic Topology, Mathematics - Combinatorics, Mathematics - Geometric Topology},
	annote = {Comment: 14 pages, 1 figure},
	file = {Preprint PDF:C\:\\Users\\sfomi\\Zotero\\storage\\CL2XYA6R\\Musin - 2025 - Homotopy groups and quantitative Sperner-type lemma.pdf:application/pdf;Snapshot:C\:\\Users\\sfomi\\Zotero\\storage\\JPGRVZFX\\2007.html:text/html},
}

@misc{panina_minimal_2024,
	title = {Minimal triangulations of circle bundles},
	url = {http://arxiv.org/abs/2311.04214},
	doi = {10.48550/arXiv.2311.04214},
	abstract = {A triangulation of a circle bundle \$ E {\textbackslash}xrightarrow[{\textbackslash}text\{\}]\{{\textbackslash}pi\} B\$ is a triangulation of the total space \$E\$ and the base \$B\$ such that the projection \${\textbackslash}pi\$ is a simplicial map. In the paper we address the following questions: Which circle bundles can be triangulated over a given triangulation of the base? What are the minimal triangulations of a bundle? A complete solution for semisimplicial triangulations was given by N. Mn{\textbackslash}"\{e\}v. Our results deal with classical triangulations, that is, simplicial complexes. We give an exact answer for an infinite family of triangulated spheres (including the boundary of the \$3\$-simplex, the boundary of the octahedron, the suspension over an \$n\$-gon, the icosahedron). For the general case we present a sufficient criterion for existence of a triangulation. Some minimality results follow straightforwadly.},
	urldate = {2025-11-04},
	publisher = {arXiv},
	author = {Panina, G. and Turevskii, M.},
	month = aug,
	year = {2024},
	note = {arXiv:2311.04214 [math]},
	keywords = {Mathematics - Algebraic Topology, Mathematics - Combinatorics},
	file = {Preprint PDF:C\:\\Users\\sfomi\\Zotero\\storage\\TLGSQWYS\\Panina и Turevskii - 2024 - Minimal triangulations of circle bundles.pdf:application/pdf;Snapshot:C\:\\Users\\sfomi\\Zotero\\storage\\JUPYVNFI\\2311.html:text/html},
}

@article{seifert_topologie_1933,
	title = {Topologie {Dreidimensionaler} {Gefaserter} {Räume}},
	volume = {60},
	issn = {1871-2509},
	url = {https://doi.org/10.1007/BF02398271},
	doi = {10.1007/BF02398271},
	number = {1},
	journal = {Acta Mathematica},
	author = {Seifert, H.},
	month = mar,
	year = {1933},
	pages = {147--238},
}

@book{birman_seifert_1980,
	series = {Pure and {Applied} {Mathematics}},
	title = {Seifert and {Threlfall}, {A} {Textbook} of {Topology}},
	isbn = {978-0-08-087405-0},
	url = {https://books.google.ru/books?id=rsb8zjP0XHoC},
	publisher = {Academic Press},
	author = {Birman, J.S. and Eisner, J.},
	year = {1980},
}

@book{fomenko_algorithmic_1997,
	address = {Dordrecht},
	title = {Algorithmic and {Computer} {Methods} for {Three}-{Manifolds}},
	copyright = {http://www.springer.com/tdm},
	isbn = {978-90-481-4925-4 978-94-017-0699-5},
	url = {http://link.springer.com/10.1007/978-94-017-0699-5},
	language = {en},
	urldate = {2025-11-05},
	publisher = {Springer Netherlands},
	author = {Fomenko, A. T. and Matveev, S. V.},
	year = {1997},
	doi = {10.1007/978-94-017-0699-5},
	keywords = {algorithm, algorithms, computer, geometry, mechanics, modeling},
}

@article{may_appreciation_2012,
	title = {An {Appreciation} of the {Work} of {Samuel} {Eilenberg} (1913-1998)},
	volume = {48},
	issn = {2543-991X},
	url = {https://wydawnictwa.ptm.org.pl/index.php/wiadomosci-matematyczne/article/viewArticle/329},
	doi = {10.14708/wm.v48i2.329},
	abstract = {\&nbsp;},
	language = {pl},
	number = {2},
	urldate = {2025-11-05},
	journal = {Wiadomości Matematyczne},
	author = {May, J. Peter},
	month = jun,
	year = {2012},
	pages = {185-197}
}

@unpublished{putman_homotopy_nodate,
    author = {Putman, A.},
    title = {Homotopy groups of spheres and low-dimensional topology},
    abstract = {We give a modern account of Pontryagin’s approach to calculating πn+1(Sn) and πn+2(Sn) using techniques from low-dimensional topology.},
    note = {Unpublished notes. URL: https://academicweb.nd.edu/{$\sim$}andyp/notes/}
}

\end{document}